\newcommand{\med}{\mbox{median}}
\newcommand{\R}{{\mathds R}}
\newcommand{\Z}{{\mathds Z}}
\newcommand{\N}{{\mathds N}}
\newcommand{\F}{{\mathcal F}}
\newcommand{\Var}{{\rm Var}}
\newcommand{\Cov}{{\rm Cov}}
\newcommand{\claw}{{\stackrel{\mathcal{D}}{\longrightarrow}}}
\newtheorem{lemma}{Lemma}[section]
\newtheorem{theorem}[lemma]{Theorem}
\newtheorem{proposition}[lemma]{Proposition}
\begin{document}
\title{Estimation of the Variance of Partial Sums of Dependent Processes
}
\thanks{The authors were supported in part by the
Collaborative Research Grant 823, Project C3 {\em Analysis of Structural
Change in Dynamic Processes}, of the German Research
Foundation.}

\author[H. Dehling]{Herold Dehling}
\author[R. Fried]{Roland Fried}
\author[O. Sh. Sharipov]{Olimjon Sh.~Sharipov}
\author[D. Vogel]{Daniel Vogel}
\author[M. Wornowizki]{Max Wornowizki}

\address{
Fakult\"at f\"ur Mathematik, Ruhr-Universit\"at Bo\-chum,
44780 Bochum, Germany}
\email{herold.dehling@rub.de}

\address{Fakult\"at Statistik, Technische Universit\"at Dortmund,
44221 Dortmund, Germany}
\email{fried@statistik.tu-dortmund.de}

\address{Department of Probability Theory and Mathematical Statistics,
Institute of Mathematics and Information Technologies, Uzbek Academy of
Sciences, Tashkent, Uzbekistan}
\email{osharipov@yahoo.com}

\address{
Fakult\"at f\"ur Mathematik, Ruhr-Universit\"at Bo\-chum,
44780 Bochum, Germany}
\email{vogeldts@rub.de}

\address{Fakult\"at Statistik, Technische Universit\"at Dortmund,
44221 Dortmund, Germany}
\email{wornowiz@statistik.tu-dortmund.de}

\begin{abstract}
We study subsampling estimators for the limit variance
\[
 \sigma^2=\Var(X_1)+2\, \sum_{k=2}^\infty \Cov(X_1,X_k)
\]
of partial sums of a stationary stochastic process $(X_k)_{k\geq 1}$. We establish $L_2$-consistency of a non-overlapping block resampling method. Our results apply to processes that can be represented as functionals of strongly mixing processes. Motivated by recent applications to rank tests, we also study
estimators for the series $\Var(F(X_1))+2\, \sum_{k=2}^\infty \Cov(F(X_1),F(X_k))$, where $F$ is the
distribution function of $X_1$. Simulations illustrate the usefulness of the proposed estimators and of a mean squared error optimal rule for the choice of the block length.
\end{abstract}

\keywords{
variance estimation; weakly dependent processes; subsampling techniques; central limit theorem; mixing processes; functionals of mixing processes
}

\maketitle


\section{Introduction and Main Results}
Let $(X_i)_{i\geq 1}$ be a stationary ergodic sequence of random variables
with mean $\mu$ and finite variance, satisfying the central limit theorem, i.e.
\begin{equation*}
 \frac{1}{\sqrt{n}} \sum_{i=1}^n (X_i-\mu) \claw N(0,\sigma^2),
\end{equation*}
where the limit variance $\sigma^2$ is given by the formula
\begin{equation}
 \sigma^2=\Var(X_1)+2\sum_{k=2}^\infty \Cov(X_1,X_k).
\label{eq:clt-var}
\end{equation}
For practical applications of the central limit theorem,
e.g., for calculation of confidence intervals or testing hypothesis concerning the mean,
we need to estimate the variance $\sigma^2$.

We investigate the properties of a subsampling estimator of $\sigma$.
 Given observations $X_1,\ldots,X_n$ and a block length $l=l_n$, we define the overall mean and the block means as
\[ 
	\bar{X}_n =\frac{1}{n} \sum_{j=1}^n X_j,  \quad \mbox{ and } \quad
	\frac{1}{l}S_i(l) = \frac{1}{l}\sum_{j=(i-1)\, l+1}^{i\, l} X_j,
\] 
respectively. By the central limit theorem, $(S_i(l)-l\, \mu)/\sqrt{l}$ converges in distribution to
$N(0,\sigma^2)$ as $l$ goes to infinity, and thus $E|S_i(l)-l\, \mu|/\sqrt{l} \rightarrow \sigma \sqrt{2/\pi}$. Thus, it
is natural to estimate $\sigma$ by the arithmetic mean of $\sqrt{\frac{\pi}{2}} \frac{|S_i(l)-l\, \mu|}{\sqrt{l}}$, $i=1,\ldots,[n/l]$. Replacing the unknown mean $\mu$ by the sample mean $\bar{X}_n$, we
obtain
\begin{equation}
	\hat{B}_n=\frac{1}{[n/l]} \sqrt{\frac{\pi}{2}} \sum_{i=1}^{[n/l]} \frac{|S_i(l)-l\bar{X}_n|}{\sqrt{l}}
\label{eq:B_n}
\end{equation}
as an estimator for $\sigma$.
In what follows we will always write $k_n=[n/l_n]$ for the number of full blocks of length $l_n$ in the set $\{1,\ldots,n\}$.

Subsampling  estimators of the limit variance $\sigma^2$ of the arithmetic mean of  a stationary stochastic process have been studied by various authors under different assumptions concerning the dependence structure.
\citet{carlstein:1986} investigated $\alpha$-mixing processes and the estimator
\begin{equation}
\label{eq:carlstein}
\hat{B}_n^{(2)}=\frac{1}{[n/l]}  \sum_{i=1}^{[n/l]} \left(\frac{|S_i(l)-l\bar{X}_n|}{\sqrt{l}}\right)^2
\end{equation}
for $\sigma^2$.  \citet{peligrad:shao:1995} studied properly normalized means of $p$-th powers of
$|S_i(l)-l\bar{X}_n|/\sqrt{l}$ as an estimator for $\sigma^p$ in the case of $\rho$-mixing processes. They consider overlapping blocks, i.e.\ 
\[
 \tilde{B}_n^{(p)}= \frac{c_p}{n-l+1}  \sum_{i=0}^{n-l} \left(\frac{|\tilde{S}_i(l)-l\bar{X}_n|}{\sqrt{l}}\right)^p,
\]
where $\tilde{S}_i(l)=\sum_{j=i+1}^{i+ l} X_j$. \citet{peligrad:suresh:1995} studied the same estimator, in the
case $p=1$, for associated processes. \citet{doukhan:2010} investigated the case of
weakly dependent \citep[in the sense of][]{doukhan:louhichi:1999} processes.

In this paper, we extend the above mentioned results to processes that can be represented as
functionals of  strongly mixing processes. We assume that $(X_i)_{i\geq 1}$ can be
written as $X_i=f((Y_{i+k})_{k\in \Z})$ for all $i\geq 1$, where $f:\R^\Z\rightarrow \R$ is a measurable function and where $(Y_i)_{i\geq 1}$ is an $\alpha$-mixing process. Recall that a process $(Y_i)_{i\geq 1}$ is called $\alpha$-mixing (or strongly mixing) if
\begin{equation*}
\alpha_k\ = \ \sup_{n\geq 1}\, \sup_{A\in \F_{\!\!-\infty}^n, B\in \F_{n+k}^\infty} |P(A\cap B)-P(A)\, P(B)|
\rightarrow 0
\end{equation*}
as $n\rightarrow \infty$. Here $\F_k^m$ denotes the $\sigma$-field generated by the random variables
$Y_{k},Y_{k+1},\ldots,Y_m$. The coefficients $\alpha_k$ are called mixing coefficients. In addition, we have to require that the function $f:\R^\Z\rightarrow \R$ defined above is continuous in a suitable sense. Define
\begin{eqnarray*}
  \psi_m&=&
 E\left| f((Y_{i+k})_{k\in \Z}) -E(f((Y_{i+k})_{k\in \Z})|
 Y_{i-m},\ldots,Y_{i+m}) \right|^{\frac{2+\delta}{1+\delta}},
\\
 \phi_m&=&E\left| f((Y_{i+k})_{k\in \Z}) -E(f((Y_{i+k})_{k\in \Z})|
 Y_{i-m},\ldots,Y_{i+m}) \right|.
\end{eqnarray*}
In what follows, we will have to make assumptions concerning the behavior of the coefficients
$\alpha_m$, $\psi_m$ and $\phi_m$ as $m\rightarrow \infty$.

Functionals of $\alpha$-mixing processes cover most of the standard examples of weakly dependent processes known in the literature, for example ARMA- and GARCH-processes from time series analysis, many Markov processes and hidden Markov models, the sequence of digits and remainders in continued fraction expansion, and many chaotic dynamical systems, such as expanding, piecewise monotone maps of the unit interval. Details can be found, e.g.,
in \citet{borovkova:burton:dehling:2001} and in \citet{bradley:2007}.
\begin{theorem}
Suppose $(X_k)_{k\geq 1}$ is a stationary process that can be expressed as a functional of an
$\alpha$-mixing process. Suppose that $E|X_1|^{2+\delta}<\infty$ for some $\delta>0$ and that the approximation coefficients $(\psi_k)_{k\geq 1}$ satisfy
\[
	\sum_{k=1}^\infty (\psi_k)^{\frac{1+\delta}{2+\delta}} <  \infty
\] 
and the mixing coefficients $(\alpha_k)_{k\geq 1}$ satisfy $\alpha_k = O(k^{-\mu})$ for some $\mu > (2+\delta)^2/\delta$.
Then, for any sequence $(l_n)_{n\geq 1}$ satisfying $l_n \nearrow \infty$ and $l_n=o(n)$, we have $\hat{B}_n \to  \sigma$ in $L_2$.
\label{th:fctl-ltwo}
\end{theorem}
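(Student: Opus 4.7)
The plan is to establish $L_2$-consistency via the bias--variance split $E(\hat B_n - \sigma)^2 = \Var(\hat B_n) + (E\hat B_n - \sigma)^2$ and to show that both summands vanish. Without loss of generality I take $\mu = 0$, the general case following by centering. My first move is to replace the sample mean in $\hat B_n$ by the true mean: set
\begin{equation*}
\tilde B_n = \sqrt{\pi/2}\,\frac{1}{k_n}\sum_{i=1}^{k_n}\frac{|S_i(l)|}{\sqrt{l}}.
\end{equation*}
The reverse triangle inequality gives $|\hat B_n-\tilde B_n|\leq\sqrt{\pi/2}\,\sqrt{l}\,|\bar X_n|$, and since the hypotheses yield $n\Var(\bar X_n)=O(1)$, this difference is $O(\sqrt{l/n})$ in $L_2$ and vanishes because $l=o(n)$. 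It therefore suffices to show $\tilde B_n\to\sigma$ in $L_2$, which I do by treating its mean and variance separately.

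For the mean, stationarity yields $E\tilde B_n = \sqrt{\pi/2}\,l^{-1/2}E|S_1(l)|$. Under our assumptions, the central limit theorem for functionals of $\alpha$-mixing processes gives $S_1(l)/\sqrt{l}\claw N(0,\sigma^2)$, so it remains to upgrade this to convergence of absolute first moments. I will do this via uniform integrability: a Rosenthal-type moment inequality for partial sums of functionals of $\alpha$-mixing processes, valid under $E|X_1|^{2+\delta}<\infty$, $\alpha_k=O(k^{-\mu})$ with $\mu>(2+\delta)^2/\delta$, and $\sum_k(\psi_k)^{(1+\delta)/(2+\delta)}<\infty$, furnishes $E|S_1(l)|^{2+\delta}=O(l^{1+\delta/2})$. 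Hence $\{|S_1(l)|/\sqrt{l}\}$ is uniformly integrable, $E|S_1(l)|/\sqrt{l}\to\sigma\sqrt{2/\pi}$, and $E\tilde B_n\to\sigma$.

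For the variance, stationarity gives
\begin{equation*}
\Var(\tilde B_n) = \frac{\pi}{2 k_n l}\Var(|S_1(l)|) + \frac{\pi}{k_n^2 l}\sum_{h=1}^{k_n-1}(k_n-h)\,\Cov\bigl(|S_1(l)|,|S_{1+h}(l)|\bigr).
\end{equation*}
The diagonal contribution is $O(1/k_n)$ because $\Var(|S_1(l)|)\leq ES_1(l)^2=O(l)$. For the cross covariances I will approximate each $X_i$ by $X_i^{(m)}=E(X_i\mid Y_{i-m},\ldots,Y_{i+m})$ with $m=\lfloor l/4\rfloor$, so that $S_1^{(m)}(l)$ and $S_{1+h}^{(m)}(l)$ are measurable with respect to index blocks of $(Y_j)$ separated by at least $(h-1)l/2$ once $h\geq 2$. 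Davydov's covariance inequality applied to the Lipschitz function $|\cdot|$ of these truncated block sums, combined with the Rosenthal bound, yields $|\Cov(|S_1^{(m)}(l)|,|S_{1+h}^{(m)}(l)|)|\leq C\,l\,\alpha_{(h-1)l/2}^{\delta/(2+\delta)}$, and Lipschitz continuity of $|\cdot|$ transfers the approximation error from $X_i$ to $|S_i(l)|$ at a cost controlled by $\phi_m$ and $\psi_m$. The assumption $\mu>(2+\delta)^2/\delta$ implies $\mu\delta/(2+\delta)>1$, so $k_n^{-1}\sum_h\alpha_{(h-1)l/2}^{\delta/(2+\delta)}=o(1)$, while the summability of $(\psi_k)^{(1+\delta)/(2+\delta)}$ handles the approximation piece. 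Combining these with the mean and reduction steps gives $\hat B_n\to\sigma$ in $L_2$.

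The \textbf{main obstacle} is the covariance estimate in the last step. The delicate point is a simultaneous balancing of approximation error against mixing decay: the truncation radius $m=m_l$ must be large enough for the effective mixing lag $(h-1)l-2m$ to retain useful polynomial decay, yet small enough that the cumulative approximation error, roughly $l\,\psi_{m}^{(1+\delta)/(2+\delta)}$ within a single block, stays negligible next to $l$. The specific rate $\mu>(2+\delta)^2/\delta$ and the summability hypothesis on $(\psi_k)^{(1+\delta)/(2+\delta)}$ are calibrated precisely to make this balance work and to simultaneously supply the Rosenthal bound required in the mean step.
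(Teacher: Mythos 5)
Your architecture (center at $\mu$, then control mean and variance of $\tilde B_n$ separately) is reasonable, and the reduction to $\tilde B_n$ and the mean step are essentially sound: uniform integrability of $|S_1(l)|/\sqrt{l}$ already follows from $\sup_l E(S_1(l))^2/l<\infty$, so you do not even need the $(2+\delta)$-moment there. The genuine gap is in the covariance step you yourself flag as the main obstacle, and it is not merely a matter of bookkeeping. Your Davydov bound requires $\|S_1^{(m)}(l)\|_{2+\delta}=O(\sqrt{l})$ \emph{uniformly in} $m=\lfloor l/4\rfloor$. But the truncated process $(X_j^{(m)})_j$ is $\alpha$-mixing only with coefficients $\alpha_{k-2m}$, which are degenerate (equal to $\alpha_0$) for all lags $k\le 2m\sim l/2$, i.e.\ for precisely the within-block lags. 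Consequently Yokoyama's inequality (the only Rosenthal-type bound available here for mixing sequences) does not yield the claimed $O(l^{1+\delta/2})$ bound with a constant independent of $m$: its constant is controlled by $\sum_k k^{t/2-1}\alpha_{k-2m}^{(2+\delta-t)/(2+\delta)}$, which grows with $m$. You would need a moment inequality for \emph{functionals} of mixing processes with constants depending only on $\sum_k\psi_k^{(1+\delta)/(2+\delta)}$ and the mixing rate; you assert such a bound but neither prove nor cite it, and without it both the Davydov step and the $O(l^{3/2}\psi_m^{(1+\delta)/(2+\delta)})$ transfer estimate (which pairs an $L_{(2+\delta)/(1+\delta)}$ bound on $S_1-S_1^{(m)}$ with an $L_{2+\delta}$ bound on a block sum) collapse. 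A secondary, fixable issue: to conclude $\sqrt{l}\,\psi_{l/4}^{(1+\delta)/(2+\delta)}\to 0$ you need more than bare summability of $(\psi_k^{(1+\delta)/(2+\delta)})$; you must invoke the near-monotonicity $\psi_m^{(1+\delta)/(2+\delta)}\le 2\,\psi_{m'}^{(1+\delta)/(2+\delta)}$ for $m\ge m'$ to upgrade summability to an $o(1/k)$ rate.

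It is worth noting how the paper sidesteps exactly this difficulty. It keeps the truncation level $s$ \emph{fixed} (independent of $n$ and $l$) and runs a three-epsilon argument: for fixed $s$ the process $\xi_i(s)=E(X_i\mid\mathcal{F}_{i-s}^{i+s})$ is genuinely $\alpha$-mixing with coefficients $\alpha_{k-2s}$, so Yokoyama's inequality applies with harmless constants; the remainder $\eta_i(s)$ is handled by a covariance bound plus Minkowski, never requiring higher moments of sums of a truncated process with growing truncation radius; and $|\sigma(s)-\sigma|$ is made small by choosing $s$ large. Moreover, for the variance of the mixing part the paper does not expand into pairwise covariances at all: it applies Yokoyama's moment inequality with $t=2$ directly to the stationary mixing sequence $\bigl(|S_i(l)-l\mu|-E|S_1(l)-l\mu|\bigr)_{i\ge 1}$ of block absolute deviations, obtaining the $O(1/k)$ bound in one step and avoiding both the adjacent-block issue and the delicate balancing you describe. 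If you want to keep your route, you must either prove the uniform-in-$m$ Rosenthal bound for the truncated block sums or restructure around a fixed truncation level as the paper does.
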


%
%
%
%
%
%
%
\citet{dehling:fried:2011} study robust non-parametric tests for structural breaks in time series. They consider the model $X_i=\xi_i+\mu_i$,
where $(\xi_i)$ is a stationary stochastic process and $(\mu_i)_{i\geq 1}$ is a sequence of unknown constants. The hypothesis of interest is
\[
 H_0:\quad \mu_1=\ldots=\mu_n,
\]
to be tested against the alternative $\mu_1=\ldots=\mu_{n_1}\neq \mu_{n_1+1}=\ldots=\mu_n$.
 Investigating the asymptotic distribution of the two-sample Hodges-Lehmann test statistic
\[
 Q_{n_1,n_2}(0.5)=\med\{ (X_j-X_i) : 1\leq i \leq n_1, n_1+1\leq j \leq n_1+n_2 \}.
\]
Dehling and Fried show that under the above null hypothesis, we have
\[
   H^\prime(0) \sqrt{\frac{n_1\, n_2}{n_1+n_2}} \frac{Q_{n_1,n_2}(1/2) }{\sqrt{\sigma_F^2}} \ \claw \ N(0,1)
\]
as $n_1,n_2\rightarrow \infty$.
Here $H(x)=P(X_1- \tilde{X}_1 \leq x)$, where $\tilde{X}_1$ is an independent copy of $X_1$, and
\begin{equation}
 \sigma_F^2=\Var(F(X_1))+2\, \sum_{k=2}^\infty \Cov(F(X_1),F(X_k)),
\label{eq:clt-var2}
\end{equation}
where $F$ is the distribution function of $X_1$. For statistical applications, the limit variance $\sigma_F^2$
has to be estimated.
We  can directly apply Theorem~\ref{th:fctl-ltwo} to the sequence $(F(X_k))_{k\geq 1}$, provided that it satisfies the conditions and that $F$ is known. The latter is generally not the case and thus $F$ has to be replaced by the empirical distribution function $\hat{F}_n$. In what follows, we will show that this leads to an $L_2$-consistent estimator.

Denoting the empirical distribution function by $\hat{F}_n(x)=\frac{1}{n}\sum_{i=1}^n 1_{\{X_i\leq x\}}$, we define the estimators
\begin{eqnarray}
	\nonumber
 D_n&=&\frac{1}{[n/l_n]} \sqrt{\frac{\pi}{2}} \sum_{i=1}^{[n/l_n]} \frac{|T_i(l_n)-l_n\, \bar{U}_n|}{\sqrt{l_n}}, \\
 \hat{D}_n&=& \frac{1}{[n/l_n]} \sqrt{\frac{\pi}{2}}
  \sum_{i=1}^{[n/l_n]} \frac{|\hat{T}_i(l_n)-l_n\, \tilde{U}_n|}{\sqrt{l_n}},
\label{eq:def-dhat}
\end{eqnarray}
where $\bar{U}_n=\frac{1}{n}\sum_{j=1}^n F(X_j)$, $\tilde{U}_n=\frac{1}{n}\sum_{j=1}^n \hat{F}_n(X_j)$, and
\[
 T_i(l) =  \sum_{j=(i-1)l+1}^{il} F(X_j), \qquad 
 \hat{T}_i(l) =  \sum_{j=(i-1)l+1}^{il} \hat{F}_n(X_j).
\]
\begin{theorem}
Let $(X_k)_{k\geq 1}$ be a stationary process satisfying the assumptions of Theorem~\ref{th:fctl-ltwo}. In addition, assume that $\alpha_n=O(n^{-8})$, $\phi_m=O(m^{-12})$ and that $F(x)=P(X\leq x)$ is Lipschitz-continuous. Then, as $n\to \infty$, $l_n \to \infty$ and $l_n=o(\sqrt{n})$, we have $\hat{D}_n \to \sigma_F$ in $L_2$.
\label{th:edf-ltwo}
\end{theorem}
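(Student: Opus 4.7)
The plan is to split $\hat D_n - \sigma_F = (D_n - \sigma_F) + (\hat D_n - D_n)$ and show that each piece tends to zero in $L_2$. The first piece will be handled by applying Theorem~\ref{th:fctl-ltwo} to the transformed sequence $(F(X_k))_{k\geq 1}$, and the second by a uniform bound on $\hat F_n - F$.

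For the first piece, I need to verify that $(F(X_k))_{k\geq 1}$ satisfies the hypotheses of Theorem~\ref{th:fctl-ltwo}. It is still a functional of the same $\alpha$-mixing process $(Y_i)$, so the mixing condition is inherited from the given $\alpha_n = O(n^{-8})$. Boundedness $F(X_k) \in [0,1]$ makes the moment condition trivial. For the approximation coefficients, I would use the $L_1$-contraction inequality for conditional expectations together with Lipschitz continuity of $F$: with $\G_m = \sigma(Y_{i-m},\ldots,Y_{i+m})$ and Lipschitz constant $L$,
\[
\phi_m^F := E\bigl|F(X_i) - E[F(X_i)\mid \G_m]\bigr| \leq 2\, E\bigl|F(X_i) - F(E[X_i\mid \G_m])\bigr| \leq 2L\, \phi_m = O(m^{-12}).
\]
Since $|F(X_i) - E[F(X_i)\mid \G_m]| \leq 1$ and $(2+\delta)/(1+\delta) > 1$, this also gives $\psi_m^F \leq \phi_m^F = O(m^{-12})$, whose $(1+\delta)/(2+\delta)$-power is certainly summable. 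Theorem~\ref{th:fctl-ltwo} then yields $D_n \to \sigma_F$ in $L_2$.

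For the second piece, the reverse triangle inequality $||a|-|b|| \leq |a-b|$ gives
\[
|\hat D_n - D_n| \leq \sqrt{\pi/2}\,\frac{1}{k_n\sqrt{l_n}} \sum_{i=1}^{k_n} \bigl|(\hat T_i(l_n) - l_n \tilde U_n) - (T_i(l_n) - l_n \bar U_n)\bigr|,
\]
and since each summand is bounded by $2\, l_n\,\|\hat F_n - F\|_\infty$, I obtain
\[
|\hat D_n - D_n| \leq \sqrt{2\pi}\,\sqrt{l_n}\,\|\hat F_n - F\|_\infty,
\qquad
E(\hat D_n - D_n)^2 \leq 2\pi\, l_n\, E\|\hat F_n - F\|_\infty^2.
\]

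The main obstacle is thus to establish an empirical-process bound of the form $E\|\hat F_n - F\|_\infty^2 = O(1/n)$ in the present functional-of-mixing setting. The strong decay rates $\alpha_n = O(n^{-8})$ and $\phi_m = O(m^{-12})$, together with Lipschitz continuity of $F$ (which in particular gives a bounded density for $X_1$), are tailored precisely to make such a bound available via empirical-process theory for functionals of mixing processes, possibly after a chaining or Bernstein-type argument is used to upgrade a weak convergence statement to a second-moment bound. Once this rate is in hand, the hypothesis $l_n = o(\sqrt n)$ yields $l_n\, E\|\hat F_n - F\|_\infty^2 = O(l_n/n) = o(n^{-1/2}) \to 0$, completing the proof.
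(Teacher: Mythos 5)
Your decomposition and the first two steps coincide with the paper's proof: you apply Theorem~\ref{th:fctl-ltwo} to the sequence $(F(X_k))_{k\geq 1}$ (your verification of the transformed approximation coefficients via the $L_1$-projection inequality and the Lipschitz property is correct, and in fact more explicit than the paper's one-line remark), and you arrive at the same elementary bound $|\hat D_n - D_n| \leq \sqrt{2\pi}\,\sqrt{l_n}\,\sup_{t\in\R}|\hat F_n(t)-F(t)|$.

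The final step, however, is a genuine gap. You reduce everything to the claim $E\sup_{t\in\R}|\hat F_n(t)-F(t)|^2 = O(1/n)$ and then assert that such a bound should be obtainable from empirical-process theory under the stated mixing rates, ``possibly after a chaining or Bernstein-type argument''. That is precisely the one nontrivial ingredient of the proof, and it cannot be waved through: a uniform rate for the empirical distribution function of a \emph{dependent} sequence does not follow from a weak-convergence statement by routine chaining, and proving it from scratch is comparable in difficulty to the result the paper actually cites. The paper's proof invokes Philipp's law of the iterated logarithm for empirical distribution functions of functionals of strongly mixing sequences (Lemma~\ref{lem:ep-lil}), which is the sole reason the hypotheses $\alpha_n=O(n^{-8})$ and $\phi_m=O(m^{-12})$ appear in the theorem; it yields $\sup_{t\in\R}|\hat F_n(t)-F(t)| = O(\sqrt{\log\log n/n})$ almost surely, hence $|\hat D_n-D_n|\to 0$ a.s.\ already under the weaker condition $l_n=o(n/\log\log n)$. (To be fair, the paper then glosses over the passage from this almost-sure statement back to the $L_2$ convergence claimed in the theorem, so your insistence on a second-moment bound is a reasonable instinct --- but you would then need a moment-type maximal inequality of Philipp's strength, which you neither name nor prove.) In short: the outline is structurally right, but the heart of the argument --- the uniform empirical-process rate for functionals of mixing processes --- is missing.
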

%
%
%
%
%
%
%
%
%
%
%
%
%
%
%
\section{Simulation study}
We investigate the performance of the proposed estimators and compare them to alternatives. The estimators are applied to ARMA(1,1) pro\-cess\-es  with Gaussian innovations and different parameter settings, which are given in Table~\ref{ts_coefs_tab}. For each parameter combination, 1000 samples are drawn, and variance, bias and mean squared error of the estimates are computed. The sample size is $n=500$ throughout.
\begin{table}[htb]
\centering
\begin{tabular}[c]{|r|c|c|c|c|c|c|c|c|c|c|c|}
\hline
AR &  0.5 	& 0.1	& -0.1& -0.8&0 	& 0 	& 0 	& 0 	& 0.5& -0.5& 0 \\ 
\hline
MA &  0 	&	0 	&	0 	&0 		&0.8& 0.1	& -0.8& -0.1& 0.5& -0.5& 0	\\
\hline
\end{tabular}
\caption{Types of ARMA(1,1) time series used in the simulation. AR and MA denote the corresponding coefficents of the ARMA(1,1) model in the usual notation.}
\label{ts_coefs_tab}
\end{table}
Besides comparing several estimators, another question addressed by the simulations is the applicability of Carlstein's rule for choosing the block length. \citet{carlstein:1986} proved it to be optimal in terms of mean squared error for his estimator $\hat{B}_n^{(2)}$, cf.~(\ref{eq:carlstein}), in case of an AR(1) process with known autocorrelation parameter.

The simulation study consists of two stages: the first part is concerned with the estimation of $\sigma^2$, cf.~(\ref{eq:clt-var}), the second part with the estimation of $\sigma_F^2$, cf.~(\ref{eq:clt-var2}).
\paragraph{Part 1} We compare $\hat{B}_n$, cf.~(\ref{eq:B_n}), the Carlstein estimator $\hat{B}_n^{(2)}$, referred to as CE in the following, and the kernel-based estimator of \citet{dejong:davidson:2000}, abbreviated by JDE in the following. While the former two depend on the block length, the latter is affected by the choice of the kernel and the bandwidth $\gamma_n$. We use the Bartlett kernel, which is a common choice due to guaranteed positive semi-definiteness.

To compute mean squared error and bias we compare the estimates to the true value of 
	$\Var(\frac{1}{\sqrt{n}} \sum_{i=1}^n X_i) = \Var(X_1) + \frac{2}{n} \sum_{k = 2} ^{n} \Cov(X_1, X_k) (n+1-k)$,
not the infinite series (\ref{eq:clt-var}). We use the fixed block lengths 1, 2, 5, 10, 30, 50 and 100, and determine the data adaptive block length following Carlstein as follows: An AR(1) process is fit to the sample (regardless of whether it was generated as such) using the R function {\tt arima()} \citep{r-language}, and the block length is computed from the estimated autocorrelation parameter. For the JDE, the bandwidths  2.8, 3.5, 4.7, 7.9 and 22.4 are used, corresponding to sixth to second root of $n = 500$.

\paragraph{Part 2} The second part of the simulation, which deals with the estimation of $\sigma_F^2$, compares $\hat{D}_n$, cf.\ (\ref{eq:def-dhat}), to the \citet{peligrad:shao:1995} estimator, denoted by PSE in the following. Both depend on a block length, but contrary to $\hat{D}_n$, the PSE is based on overlapping blocks. The data generation is exactly the same as in the first part (sample size, number of repetitions and ARMA parameters), but the data evaluation is different: Each sample is split into halves and the estimate of interest is computed as the mean of the two subsample estimates. This choice is motivated by the intended application to the change-point test by \citet{dehling:fried:2011}. 
The block lengths considered are 2, 5, 10, 25, 40, 50  for $\hat{D}_n$ and 5, 10, 25, 40, 45 for the PSE. Note that, while a block length 1 is of interest in the first simulation, since it yields a variance estimator for independent observation, it leads to data independent estimators in this situation and is hence neglected.

For computing the true value of $\sigma^2_F$ we make use of the fact that the population version of Spearman's rho $\varrho_s(X,Y) = 12 E\left(F(X)F(Y)\right) - 3$ 
equals $\frac{6}{\pi} \arcsin \left(\frac{\rho}{2}\right)$ if $X$ and $Y$ are jointly Gaussian with correlation $\rho$. This result can be traced back to \citet{pearson:1907}; for a newer reference see, e.g., \citet{croux:dehon:2010}. Hence, letting $\rho_k$ denote the correlation of $X_1$ and $X_k$, we have
\[
	\Cov\left(F(X_1),F(X_k)\right) 
	\ = \ \frac{1}{12} \varrho_s \ = \ \frac{1}{2 \pi} \arcsin\left(\frac{\rho_k}{2}\right).
\] 
\begin{figure}[t]
\begin{minipage}{0.49\linewidth}
\includegraphics[width=\textwidth]{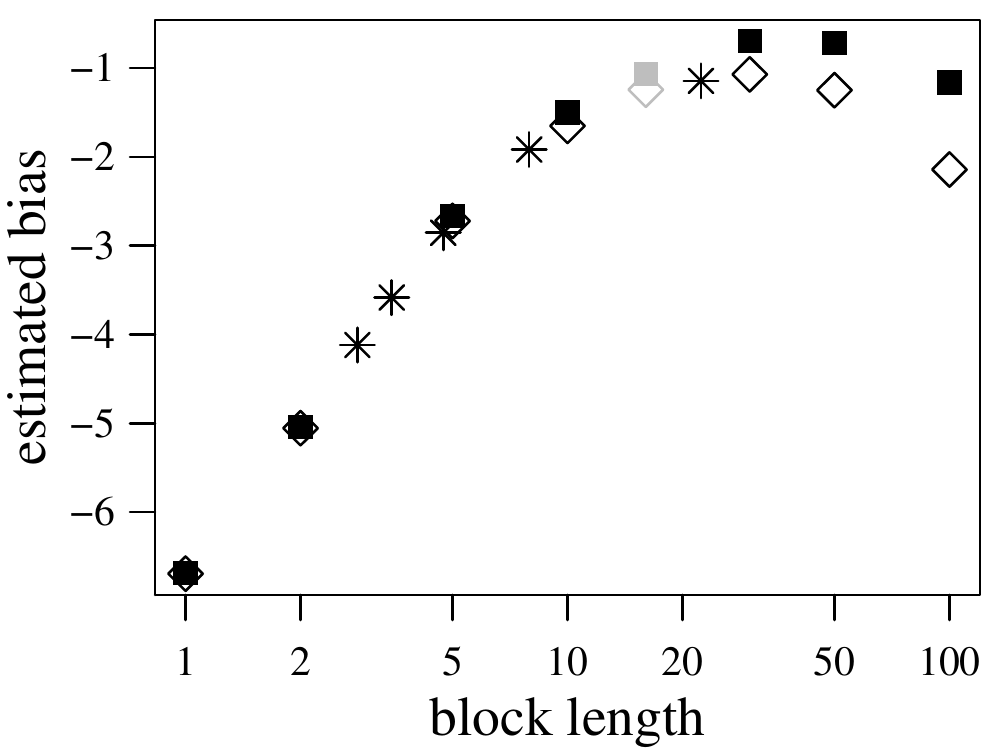}
\end{minipage}
\hfill 
\begin{minipage}{0.49\linewidth}
\includegraphics[width=\textwidth]{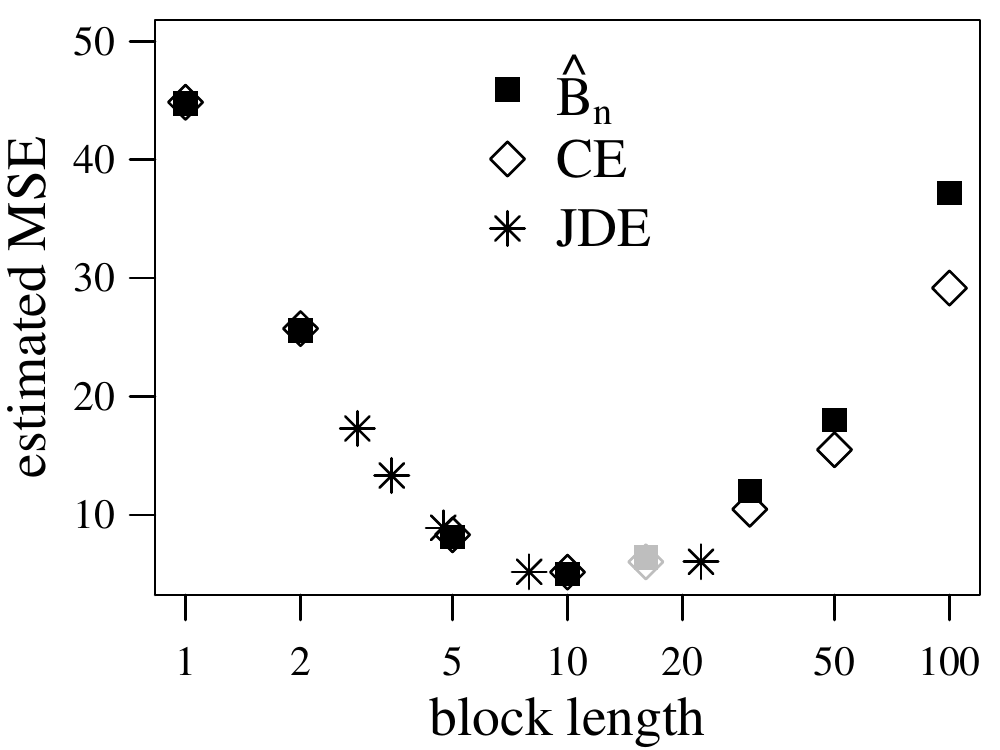}
\end{minipage}
\caption{Estimated bias (left) and mean squared error (right) of the estimators $\hat{B}_n$, CE and JDE as a function of the smoothing parameter; computed from 1000 samples of size 500 generated from an ARMA(0.5, 0.5) model. 
Gray symbols correspond to the data adaptive block length, plotted at the mean block length of 16.04. The x-axis is on log scale.
}
\label{fig:Bias.MSE.1}
\end{figure}

\paragraph{Results}  
As far as the comparison of the estimators is concerned, we get similar results for all ARMA parameter settings, which are exemplified for the $\sigma^2$-estimators at an ARMA(0.5,0.5) process in Figure \ref{fig:Bias.MSE.1} and for the $\sigma^2_F$-estimators at an MA(0.1) process in Figure \ref{fig:Bias.MSE.2}. The performance of the estimators is generally quite similar. This is also true for $\hat{D}_n$ and the PSE, which may seem surprising, since the PSE, in contrast to $\hat{D}_n$, uses overlapping blocks.  In terms of bias, $\hat{B}_n$ and $\hat{D}_n$  slightly outperform their competitors (see left panels of Figures \ref{fig:Bias.MSE.1} and \ref{fig:Bias.MSE.2}), but they show slightly bigger variances, leading to a worse mean squared error (see right panels).  
The choice of the smoothing parameter (block length or bandwidth) has altogether a much larger influence than the choice of the method.
We furthermore observe the following:
The data adaptive block lengths, which correspond to the gray symbols in the figures, perform quite well in terms of mean squared error (MSE) unless the MA-coefficient is strongly negative.
Except for a few cases, $\sigma^2$ is always underestimated if all autocorrelations of the underlying process are non-negative and overestimated if all are non-positive.
In the white noise case, a block length of 1 is MSE optimal, but other block lengths give very similar results. On the other hand, even for small non-zero AR- or MA-coefficients, not accounting for the dependence substantially impairs the accuracy of the estimation.

In summary, all estimators considered show a very similar performance for good choices of the smoothing parameter -- despite some considerable differences in their construction. An unfavorable choice of the block length leads to an increase in both, bias and variance, but with varying extents for the different estimators. Based on our results, we suggest to use the estimators $\hat{B}_n$, $\hat{D}_n$  if one is willing to trade a smaller bias for a bigger variance. 
The data adaptive block length proposed by \citet{carlstein:1986} seems to yield acceptable results for all block length dependent estimators even if the assumption of an AR(1) process is not met. 
We also carried out some simulations for other values of $n$. While the estimators' performance, for fixed block length respectively bandwidth, clearly improves with the sample size, the ranking of the estimators is little affected. Therefore we focus on a fixed sample size here.

\begin{figure}[t]
\begin{minipage}{0.49\linewidth}
\includegraphics[width=\textwidth]{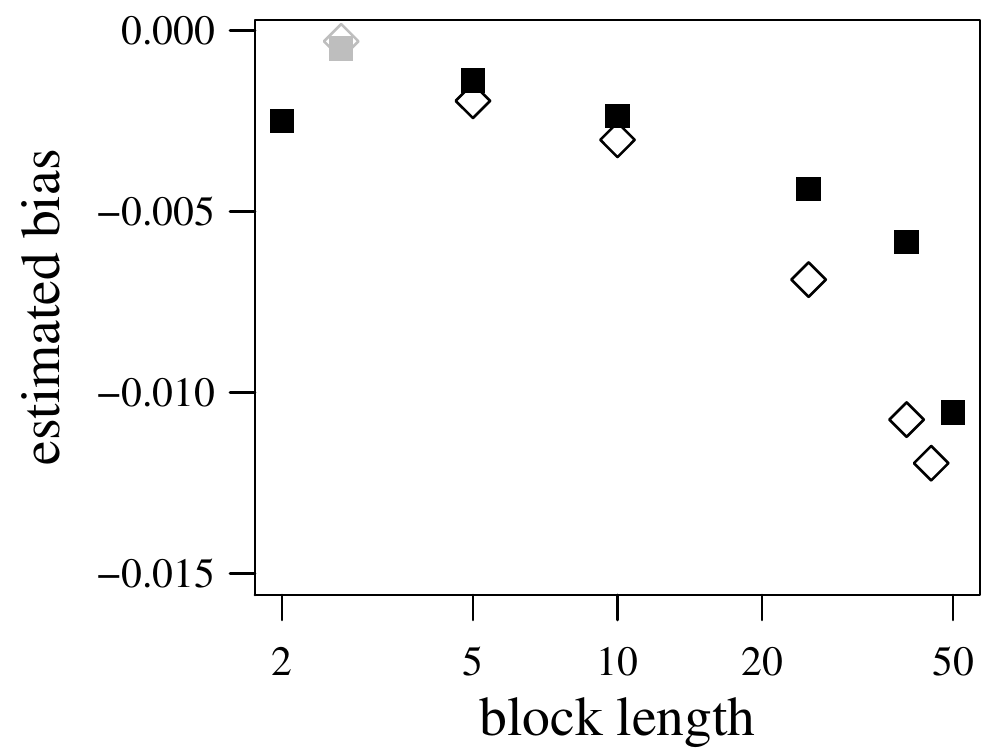}
\end{minipage}
\hfill 
\begin{minipage}{0.49\linewidth}
\includegraphics[width=\textwidth]{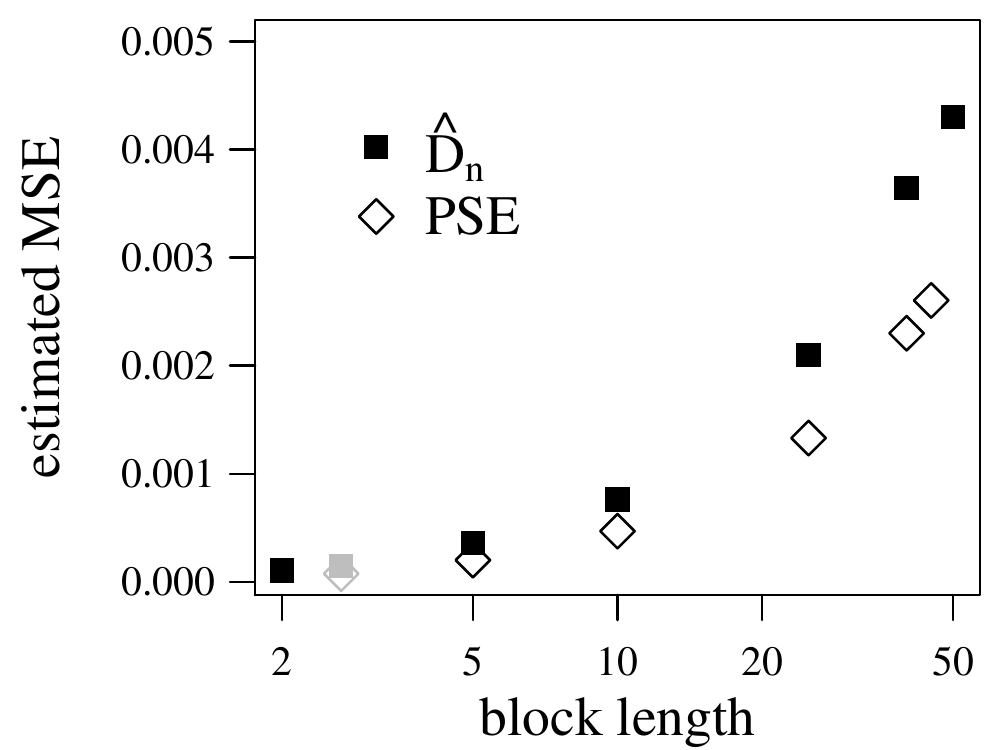}
\end{minipage}
\caption{Estimated bias (left) and mean squared error (right) of the estimators $\hat{D}_n$ and PSE as a function of the block length; computed from 1000 samples of size 500 generated from an ARMA(0.5, 0.5) model.
Gray symbols correspond to the data adaptive block length, plotted at the mean block length of 2.7. The x-axis is on log scale.
}
\label{fig:Bias.MSE.2}
\end{figure}
%
%
%

%
%
%
%
%
%
%
%
%
%
%
%
%
%
%
%
%
%
%

%
%
%
%
%
%
%
%

\section{Proofs}
We will first state and prove a special case of Theorem~\ref{th:fctl-ltwo} where the process $(X_i)_{i\geq 1}$
is $\alpha$-mixing. We need this result later on in the proof of Theorem~\ref{th:fctl-ltwo}.

\begin{proposition}
Suppose $(X_k)_{k\geq 1}$ is a stationary, $\alpha$-mixing process with\\ $E(|X_1|^{2+\delta}) < \infty$ for some $\delta >0$ and mixing coefficients $(\alpha_k)_{k\geq 1}$ satisfying 
$\alpha_k = O(k^{-\mu})$ for some $\mu > (2+\delta)^2/\delta$. Then, for any sequence $(l_n)_{n\geq 1}$ satisfying $l_n \nearrow \infty$ and $l_n=o(n)$, we have $\hat{B}_n \to \sigma$ in $L_2$.
\label{prop:sm-ltwo}
\end{proposition}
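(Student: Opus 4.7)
The plan is a bias--variance decomposition after two preliminary simplifications. First, I assume $E(X_1)=0$ without loss of generality, since the block-centered quantities $S_i(l_n)-l_n\bar X_n$ are invariant under constant shifts of the data. Second, I replace $\bar X_n$ by $0$ and work with
\[
\tilde B_n = \sqrt{\pi/2}\,k_n^{-1}\sum_{i=1}^{k_n} Z_i, \qquad Z_i = |S_i(l_n)|/\sqrt{l_n}.
\]
The reverse triangle inequality gives $|\hat B_n-\tilde B_n|\le \sqrt{\pi/2}\,\sqrt{l_n}\,|\bar X_n|$, and since the mixing/moment hypotheses yield absolute summability of the autocovariances of $(X_k)$, one has $E\bar X_n^2=O(1/n)$, so $E|\hat B_n-\tilde B_n|^2 = O(l_n/n) = o(1)$ by $l_n=o(n)$. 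Thus it suffices to prove $\tilde B_n\to\sigma$ in $L_2$.

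For the limit, I would apply the central limit theorem to a single block to obtain $Z_1 \claw |N(0,\sigma^2)|$, whose mean is $\sigma\sqrt{2/\pi}$. Uniform integrability of the family $(Z_1)_{l\ge 1}$, delivered by a Yokoyama-type moment bound $E|S_l|^{2+\delta'}\le Cl^{(2+\delta')/2}$ for some $\delta'\in(0,\delta)$, upgrades this weak convergence to convergence in mean, and stationarity then yields $E(\tilde B_n)=\sqrt{\pi/2}\,E(Z_1)\to\sigma$. For the variance, the non-overlapping block structure is essential: expanding
\[
\Var(\tilde B_n) = \frac{\pi}{2k_n^2}\Bigl[k_n\Var(Z_1) + 2\sum_{1\le i<j\le k_n}\Cov(Z_i,Z_j)\Bigr],
\]
the diagonal term contributes $O(k_n^{-1})$. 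For the off-diagonal, Davydov's covariance inequality with exponents $p=q=2+\delta'$, together with the separation of at least $(j-i-1)l_n$ indices between the underlying $\sigma$-fields, yields $|\Cov(Z_i,Z_j)|\le C\,\alpha_{(j-i-1)l_n+1}^{\delta'/(2+\delta')}$. Summing along one diagonal, using $\alpha_k=O(k^{-\mu})$, shows $\sum_{h\ge 1}|\Cov(Z_1,Z_{1+h})|=O(1)$ whenever $\mu\delta'/(2+\delta')>1$, so the off-diagonal contribution to $\Var(\tilde B_n)$ is also $O(k_n^{-1})\to 0$.

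The key obstacle is to pick a single $\delta'\in(0,\delta)$ that makes both steps work simultaneously. Yokoyama's moment inequality, applied with $p=2+\delta'$ and excess $(2+\delta)-(2+\delta')=\delta-\delta'$, requires $\mu > (2+\delta')(2+\delta)/[2(\delta-\delta')]$, while the covariance sum requires $\mu > (2+\delta')/\delta'$. Balancing the two by setting $\delta'=2\delta/(4+\delta)$ reduces both conditions to $\mu > 2+4/\delta$; the hypothesis $\mu > (2+\delta)^2/\delta = 4/\delta+4+\delta$ comfortably exceeds this threshold, and this is the only place where the specific exponent in the hypothesis is used.
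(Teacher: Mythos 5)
Your proof is correct and follows essentially the same route as the paper's: reduce to the known-mean statistic at cost $O(\sqrt{l_n/n})$ in $L_2$, get the bias term from the block CLT plus uniform integrability via Yokoyama's moment bound, and show the variance of the block average is $O(1/k_n)$. The only (minor) difference is that for the variance term the paper applies Yokoyama's inequality with $t=2$ to the block process $(|S_i(l)|)_i$, whereas you sum Davydov covariance bounds by hand; your explicit balancing of the two exponent conditions at $\delta'=2\delta/(4+\delta)$ is a nice check that the hypothesis $\mu>(2+\delta)^2/\delta$ suffices, consistent with the paper's choice $\delta'=\delta/2$.
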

\begin{proof} We will show that $ E\left|\hat{B}_n - \sigma \right|^2\rightarrow 0$
as $n\rightarrow \infty$. By definition of $\hat{B}_n$, we get
\[
 \left|\hat{B}_n-  \sigma  \right|
 \leq \left|\frac{1}{k} \sqrt{\frac{\pi}{2}} \sum_{i=1}^k \frac{|S_i(l_n)-l_n\, \mu|}{\sqrt{l_n}}
 - \sigma \right|  +   \sqrt{\frac{\pi}{2}} \sqrt{l_n}(\bar{X}_n-\mu) \quad.
\]
As $l_n=o(n)$, we get by the central limit theorem
$\sqrt{l_n}(\bar{X}_n-\mu) \rightarrow 0$ in $L^2$. Thus, it remains to prove
that
\[
 E\left|\frac{1}{k}\sum_{i=1}^k \sqrt{\frac{\pi}{2}} \frac{|S_i(l_n)-l_n \, \mu|}{\sqrt{l_n}}
  - \sigma \right|^2\rightarrow 0.
\]
By Lemma \ref{lem3} we have
\[
 E\left(\frac{|S_i(l_n)-l_n\, \mu|}{\sqrt{l_n}} \right)
 \rightarrow \sqrt{2/\pi}\,  \sigma
\]
and thus it suffices to show
\[
 E\left|\frac{1}{k}\sum_{i=1}^k \frac{|S_i(l_n)-l_n \, \mu|}{\sqrt{l_n}}
  -E\left( \frac{|S_i(l_n)-l_n \, \mu|}{\sqrt{l_n}} \right)     \right|^2
\rightarrow 0.
\]
Applying Lemma \ref{lem1} with $t=0$ to the stationary $\alpha$-mixing process $(S_i(l))_{i\geq 1}$, we obtain
\begin{eqnarray*}
 	& & E \left| \frac{1}{k} \sum_{i=1}^{k}\dfrac{\lvert S_{i}(l_n)-l_n\mu\rvert-
E\lvert S_{1}(l_n)-l_n\mu\rvert}{\sqrt{l_n}}\right|^{2}\\
 \qquad  &=&
 \frac{1}{k^{2}l_n}E\left| \sum_{i=1}^{k}\left(\lvert S_{i}(l_n)-l_n\mu\rvert-
E\lvert S_{1}(l_n)-l_n\mu\rvert\right)\right|^{2}\\
&\leq& \frac{C}{k \, l_n}
 \left( E \big|\left( \lvert S_{1}(l_n)-l_n\mu\rvert-
 E\lvert S_{1}(l_n)-l_n\mu\rvert \right) \big|^{2+\frac{\delta}{2}}\right)^{\frac{2}{2+\delta/2}}  \\
&\leq & \frac{C}{k \, l_n} \left(E\lvert S_{1}(l_n)-l_n\mu\rvert^{2+\frac{\delta}{2}}\right)^
{\frac{2}{2+\delta/2}} 
\ \ \leq \ \ \frac{C}{k}   \left(E\lvert X_1\rvert^
{2+\delta}\right)^{\frac{2}{2+\delta}},
\end{eqnarray*}
where we have used Lemma \ref{lem1} with $t=\delta/2$ in the final step. As $k=k_n\rightarrow \infty$, we have
thus proved Proposition~\ref{prop:sm-ltwo}.  %
\end{proof}

\begin{proof}[Proof of Theorem~\ref{th:fctl-ltwo}]
Defining
\[ 
 \xi_i(s) = E(X_i|\F_{i-s}^{i+s}), \qquad
\eta_i(s) = X_i -  E(X_i|\F_{i-s}^{i+s}),
\]
we decompose $(X_i)_{i\geq 1}$ into a part that is $\alpha$-mixing
and a remaining part, which is  small. In addition, we  define the corresponding arithmetic means
$\overline{\xi}_n=\frac{1}{n} \sum_{i=1}^n \xi_i(s)$ and
$\overline{\eta}_n=\frac{1}{n} \sum_{i=1}^n \eta_i(s)$
and the block sums
\[
 S_{i}(\xi,l) = \! \sum_{j=(i-1)l+1}^{il}\!  \xi_j(s), \qquad
 S_{i}(\eta,l)= \! \sum_{j=(i-1)l+1}^{il}\! \eta_j(s), \qquad i \in \N.
\]
Furthermore, let
\[
	\sigma^2(s)=\Var(\xi_1(s))+2\sum_{i=2}^{\infty} \Cov(\xi_{1}(s),\xi_{i}(s)).
\]
For fixed $s$, the process $\xi_i(s)$ is $\alpha$-mixing with coefficients
$\alpha_{i-2s}$. Moreover, we have $E(|\xi_i(s)|^{2+\delta})<\infty$ and
$E(|\eta_i(s)|^{2+\delta})<\infty$. Finally, one can show that
\begin{equation*}
 |\Cov(\eta_0(s),\eta_j(s))  | =O\left(\alpha_{[j/3]}^{\delta/(2+\delta)}
 +\psi_{[j/3]}^{(1+\delta)/(2+\delta)}\right),
\end{equation*}
see \citet{ibragimov:linnik:1971}.  Observe that
\begin{eqnarray}
 \left|\hat{B}_n - \sigma  \right|
 &\leq&  \left|\frac{1}{k} \sqrt{\frac{\pi}{2}}  \sum_{i=1}^k
 \frac{|S_i(l,\xi)-l\, \overline{\xi}_n|}{\sqrt{l}} -
   \sigma(s)   \right|  + |\sigma-\sigma(s)|
\label{eq:main-th}    \\
&& \qquad +\frac{1}{k} \sqrt{\frac{\pi}{2}}\sum_{i=1}^k
 \frac{|S_i(l,\eta)-l\, \overline{\eta}_n|}{\sqrt{l}}\nonumber \\
&\leq &  \left|\frac{1}{k} \sqrt{\frac{\pi}{2}}  \sum_{i=1}^k
 \frac{|S_i(l,\xi)-l\, \overline{\xi}_n|}{\sqrt{l}} -
   \sigma(s)   \right|  + |\sigma-\sigma(s)| \nonumber \\
&& \qquad  + \frac{1}{k} \sqrt{\frac{\pi}{2}} \sum_{i=1}^k
 \frac{|S_i(l,\eta)-l\, E(\eta_1(s))|}{\sqrt{l}}
 +\sqrt{l}|E(\eta_1(s)) - \overline{\eta}_n|.
\nonumber
\end{eqnarray}
We will now bound each of the terms on the right hand side separately.
To the third term, we apply the inequality
\[ 
 E\left(\frac{S_i(l,\eta) -l E\eta_1(s)}{\sqrt{l}} \right)^2
  \leq  \Var(\eta_1(s))+2\sum_{j=2}^l\left(1-\frac{j}{l}\right) \Cov(\eta_1(s),\eta_j(s)) 
\]\[
  \leq \, (2N+1)\Var(\eta_1(s)) + 2 \, \sum_{j=N}^l\left((\alpha_{j/3}^{\delta/(2+\delta)}
  + \psi_{j/3}^{(1+\delta)/(2+\delta)}   \right),
\]
valid for any $2\leq N\leq l$. We fix $N$ and choose $s_0$ sufficiently large such that for all $s\geq s_0$
\[
  E\left(\frac{S_i(l,\eta) -l E\eta_1(s)}{\sqrt{l}}\right)^2 \leq \epsilon^2
\]
and hence by Minkowski's inequality
\[
 \left\| \frac{1}{k}\sum_{i=1}^k
  \frac{|S_i(l,\eta)-l\, E(\eta_1(s))|}{\sqrt{l}} \right \|_2\leq \epsilon.
\]
Since $\sigma(s)\rightarrow \sigma$, we also have $|\sigma(s)-\sigma|\leq \epsilon$ for all
$s\geq s_0$ with $s_0$ sufficiently large. 
Proposition~\ref{prop:sm-ltwo} implies that, for any fixed $s$, 
\[
 \frac{1}{k}\sqrt{\frac{\pi}{2}}\sum_{i=1}^k
 \frac{|S_i(l,\xi)-l\, \overline{\xi}_n|}{\sqrt{l}}
 \rightarrow \sigma(s).
\]
in $L_2$ as $n\rightarrow\infty$.
Hence the first term on the right hand side of (\ref{eq:main-th}) converges to $0$ in
$L_2$. Finally, stationarity and Lemma \ref{lem2} imply  $\sqrt{l} \left|\overline{\eta}_n- E(\eta_1(s)) \right|  \rightarrow 0$ in $L_2$ as $n\rightarrow\infty$. This finishes the proof of Theorem~\ref{th:fctl-ltwo}. 
\end{proof}
\begin{proof}[Proof of Theorem~\ref{th:edf-ltwo}]
As $F$ is Lipschitz continuous, the process $(F(X_k))_{k\geq 1}$ satisfies the conditions of
Theorem~\ref{th:fctl-ltwo}, hence $D_n\rightarrow \sigma_F$ in $L_2$ by Theorem~\ref{th:fctl-ltwo}. Thus it suffices to show that $\hat{D}_n-D_n \rightarrow 0$ almost surely. We get
\begin{eqnarray*}
	|D_n-\hat{D}_n|
	\leq \, && \hspace{-1em} \frac{1}{[n/l_n]\sqrt{l_n}} \sqrt{\frac{\pi}{2}} \sum_{i=1}^{[n/l_n]}
 	\left|  T_i(l_n)-\hat{T}_i(l_n)-l_n(\bar{U}_n-\tilde{U}_n )\right| \\
%
 \leq 
	\frac{\sqrt{\pi/2}}{[n/l_n]\sqrt{l_n}}  \sum_{i=1}^{[n/l_n]} && \hspace{-2em}
 	\left(\sum_{j=(i-1)l+1}^{il}|\hat{F}_n(X_j)-F(X_j)| + \frac{l_n}{n}\sum_{i=1}^n|\hat{F}_n(X_i)-F(X_i)|    \right) \\[0.5ex]
 \leq \, &&  \hspace{-1em} 2 \sqrt{l_n}\sup_{t\in \R} |\hat{F}_n(t)-F(t)|. 
\end{eqnarray*}
Lemma \ref{lem:ep-lil} implies $|D_n-\hat{D}_n|\rightarrow 0$ almost surely, provided $l_n=o(\frac{n}{\log\log n})$. This proves that $\hat{D}_n$ is an $L_2$ consistent estimator of the limit variance $\sigma_F$.
\end{proof} 
\section{Auxiliary results}
For ease of reference we state some results from the literature on weakly dependent processes that
we need in the course of the proofs.
\begin{lemma}\label{lem1}{\rm \citep{yokoyama:1980}}
Let $(X_i)_{i\geq 1}$ be a stationary sequence
of random variables with $EX_1=\mu$,
\[
	E\lvert X_1\rvert^{2+\delta}<\infty \quad \mbox{and}\quad
 \sum_{k=1}^{\infty}n^{\frac{t}{2}-1}
 \left( \alpha_k\right) ^{\frac{2+\delta-t}{2+\delta}}<\infty,
\]
for some $0<\delta \leq \infty$  and $2 \leq t < 2+\delta$. Then
\[
E\lvert\sum_{k=1}^{n}(X_i-\mu)\rvert^{t}\leq Cn^{\frac{t}{2}}\left(
E\lvert X_1\rvert^{2+\delta}\right) ^{\frac{t}{2+\delta}}.
\]
\end{lemma}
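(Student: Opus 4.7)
The plan is to prove this Rosenthal-type moment inequality via a Bernstein blocking argument combined with Davydov's covariance inequality, starting from the case $t=2$ and leveraging Rosenthal's inequality for independent random variables to handle $t>2$. Without loss of generality I assume $\mu=0$, which costs nothing since the hypothesis is invariant under centering.

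First I would dispose of the base case $t=2$. Expanding the second moment gives
\[
    E\Bigl|\sum_{i=1}^n X_i\Bigr|^2 = n\Var(X_1) + 2\sum_{1\leq i<j\leq n}\Cov(X_i,X_j),
\]
and Davydov's covariance inequality yields $|\Cov(X_1,X_{1+k})| \leq C\, \alpha_k^{\delta/(2+\delta)} (E|X_1|^{2+\delta})^{2/(2+\delta)}$. At $t=2$ the hypothesis reduces to $\sum_k \alpha_k^{\delta/(2+\delta)} <\infty$, which collapses the double sum to an $O(n)$ term and produces the claimed bound.

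For $t>2$ the idea is a Bernstein-type block decomposition: partition $\{1,\dots,n\}$ into alternating big blocks of length $p$ and small blocks of length $q$, and write $\sum_i X_i = U + V$ with $U=\sum_{r=1}^k U_r$ the sum over the $k\approx n/(p+q)$ big blocks and $V$ the remainder. A Berbee/Bradley coupling lemma then produces independent copies $U_r^*$ of the $U_r$'s on an enlarged probability space at a total-variation cost bounded by $k\alpha_q$; on the coupled part I apply the Rosenthal inequality for independent summands, controlling its two terms by (i) the $t$-th moment bound $\sum_r E|U_r|^t \leq k\cdot p^{t/2}(E|X_1|^{2+\delta})^{t/(2+\delta)}$ — itself obtained from the desired inequality applied to a single block, or bootstrapped inductively in $n$ — and (ii) the second-moment bound $(\sum_r E U_r^2)^{t/2}$, which follows from the $t=2$ case already established. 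The small-block remainder $V$ is handled by Minkowski's and Hölder's inequalities.

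The main obstacle will be the joint optimization of $p$ and $q$: the coupling error $k\alpha_q$, the small-block contribution, and the Rosenthal main term must all be balanced at order $n^{t/2}(E|X_1|^{2+\delta})^{t/(2+\delta)}$. The precise summability hypothesis $\sum_k k^{t/2-1}\alpha_k^{(2+\delta-t)/(2+\delta)}<\infty$ is exactly the Abel-summation condition that interpolates between the $t=2$ bound on covariances and the decay required to absorb the higher-order Rosenthal term into $n^{t/2}$; verifying this interpolation explicitly — trading moments for mixing via Hölder with exponent $(2+\delta)/(2+\delta-t)$ and its conjugate — is where the technical bookkeeping lives. Once the exponents line up, the inequality follows by iterating the blocking argument (or, equivalently, by induction on $n$ with a careful choice $p\asymp n^{1/2}$ and $q$ chosen so that the mixing tail beyond $q$ is negligible).
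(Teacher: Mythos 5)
The paper does not prove this statement at all: it is quoted verbatim (typo and all --- the summation index in the hypothesis should read $k^{\frac{t}{2}-1}$, not $n^{\frac{t}{2}-1}$) from Yokoyama (1980) as an auxiliary result, so there is no in-paper proof to compare against. Yokoyama's own argument is a direct dyadic induction on $E|S_n|^t$, splitting the sum into two halves separated by a gap and controlling the cross terms with Davydov-type covariance inequalities; your Bernstein-blocking-plus-coupling route is genuinely different from that.

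On its merits, your sketch has one concrete gap that would sink it as written: the coupling step. Berbee's lemma, which replaces the big-block sums $U_r$ by independent copies $U_r^*$ at a total-variation cost of one mixing coefficient per block, is a $\beta$-mixing (absolute regularity) result. Under $\alpha$-mixing --- which is all the lemma assumes --- no such total-variation coupling exists; the available substitute (Bradley's coupling lemma) only gives $P(|U_r-U_r^*|>\epsilon)\leq 18\,(\|U_r\|_\lambda/\epsilon)^{\lambda/(2\lambda+1)}\alpha_q^{2\lambda/(2\lambda+1)}$, so the error depends on the moments of the blocks and carries a degraded power of $\alpha_q$. Your claimed ``total-variation cost bounded by $k\alpha_q$'' is therefore false in this setting, and the subsequent balancing of exponents --- which you correctly identify as the place where the hypothesis $\sum_k k^{t/2-1}\alpha_k^{(2+\delta-t)/(2+\delta)}<\infty$ must enter --- would have to be redone with the weaker coupling; it is not clear (and historically has not been the way this inequality is obtained) that one recovers Yokoyama's sharp condition that way. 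Your $t=2$ base case via Davydov is fine, and the induction-on-$n$ device for $\sum_r E|U_r|^t$ is salvageable in principle, but to prove the lemma as stated you should abandon the coupling and instead follow the Ibragimov--Yokoyama scheme: bound $E|S_{m+n}|^t$ by $E|S_m|^t+E|S_n|^t$ plus cross terms estimated through the covariance inequality applied to $|S_m|^{t-1}\,\mathrm{sgn}(S_m)$ and $S_n$ after inserting a gap, and iterate over dyadic scales; the stated summability condition is exactly what makes the accumulated cross terms $O(n^{t/2})$.
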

\begin{lemma}\label{lem2}{\rm \citep{shao:yu:1993}}
Let $(\xi_i)_{i\geq 1}$ be a sequence of random variables with $E\xi_n=0$ and $\sup E\xi_n^{2}<\infty$. Assume that there is a constant $C>0$ such that $\sup E\left(\sum_{i=k+1}^{k+n}\xi_i\right)^{2}\leq C n$ for any $n\geq 1$. Then
\begin{eqnarray*}
\limsup\dfrac{\lvert\sum_{i=1}^{n}\xi_i\rvert}{\sqrt{n}\log^{2}n}=0 \mbox{ a.s.}
\end{eqnarray*}
\end{lemma}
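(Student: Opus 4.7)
The plan is to follow the classical two-step argument: first establish almost sure decay along a geometric subsequence via Chebyshev and Borel--Cantelli, then fill the gaps with a maximal inequality of M\'oricz type. Setting $n_k=2^k$, the hypothesis yields $E S_{n_k}^2 \leq C n_k$, so Chebyshev gives
\[
	P\bigl(|S_{n_k}| > \epsilon\sqrt{n_k}\log^2 n_k\bigr)
	\,\leq\, \frac{E S_{n_k}^2}{\epsilon^2 n_k \log^4 n_k}
	\,\leq\, \frac{C'}{\epsilon^2 k^4},
\]
which is summable in $k$, so Borel--Cantelli implies $S_{n_k}/(\sqrt{n_k}\log^2 n_k) \to 0$ almost surely.

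To handle $n\in(n_k,n_{k+1}]$, I would bound $M_k = \max_{n_k < n \leq n_{k+1}} |S_n - S_{n_k}|$. Applying the hypothesis to blocks of length $m\leq n_{k+1}-n_k = n_k$ starting at index $n_k$, a M\'oricz-type maximal inequality (binary chaining) gives
\[
	E M_k^2 \,\leq\, C(n_{k+1}-n_k)\bigl(\log_2(n_{k+1}-n_k)\bigr)^2 \,\leq\, C' n_k k^2.
\]
A second Chebyshev/Borel--Cantelli step then yields $M_k/(\sqrt{n_k}\log^2 n_k)\to 0$ almost surely, since the resulting probabilities are bounded by a summable $C''/(\epsilon^2 k^2)$.

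Since $\sqrt{n}\log^2 n$ is nondecreasing in $n$, for $n\in(n_k,n_{k+1}]$ one has
\[
	\frac{|S_n|}{\sqrt{n}\log^2 n}
	\,\leq\, \frac{|S_{n_k}| + M_k}{\sqrt{n_k}\log^2 n_k},
\]
which tends to $0$ almost surely by the two steps above, giving $\limsup |S_n|/(\sqrt{n}\log^2 n) = 0$ a.s. The main obstacle is the maximal inequality itself: one must upgrade the second-moment bound on the block sum $\sum_{i=j+1}^{j+m}\xi_i$ into a bound of the same order (up to a logarithmic factor) on its running maximum over $m\leq n$. This is precisely the content of M\'oricz's lemma, which converts a (super)additive estimate $g(j,n)\leq Cn$ into $E\max_{m\leq n}(\sum_{i=j+1}^{j+m}\xi_i)^2 \leq C(\log_2 2n)^2 n$. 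With this in hand, the remainder is routine.
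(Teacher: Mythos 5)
Your argument is correct. Note, however, that the paper does not prove this lemma at all: it is quoted verbatim from \citet{shao:yu:1993} in the section of auxiliary results, so there is no in-paper proof to compare against. What you have written is the standard route to such a statement and it goes through: the hypothesis applied at starting index $0$ gives $ES_{n_k}^2\leq Cn_k$ for $n_k=2^k$, Chebyshev and Borel--Cantelli handle the dyadic subsequence (probabilities of order $k^{-4}$), and the uniform-in-$k$ second-moment bound on block sums is exactly the superadditive input $g(j,m)=Cm$ needed for the Rademacher--Menshov/M\'oricz maximal inequality, yielding $EM_k^2\leq C(\log_2 2n_k)^2 n_k \asymp k^2 n_k$ and hence summable probabilities of order $k^{-2}$ for the gap maxima. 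The final monotonicity step $\sqrt{n}\log^2 n \geq \sqrt{n_k}\log^2 n_k$ for $n\in(n_k,n_{k+1}]$ is also fine, and letting $\epsilon\downarrow 0$ along a countable sequence upgrades the bound to $\limsup=0$. The only element you leave uncited in detail is the maximal inequality itself, but you name it correctly and it is a standard quotable result, so I see no gap.
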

\begin{lemma}\label{lem3}{\rm \citep{ibragimov:linnik:1971}}
Let $(\xi_i)_{i\geq 1}$ be a stationary sequence of random variables
with $E\xi_1=0$, $E\lvert\xi_1\rvert^{2+\delta}<\infty$ and $\sum_{k=1}^{\infty}\left( \alpha_k\right) ^{\frac{\delta}{2+\delta}}<\infty$ for some $\delta>0$. Then $\sigma^{2}=E\xi_1^{2}+2\sum_{i=2}^{\infty}E(\xi_{1}\xi_{i})<\infty$ and, in the case $\sigma^{2}>0$,
\begin{equation*}
\dfrac{1}{\sigma\sqrt{n}}\left( \xi_1+...+\xi_n\right)\, \claw \, N(0,1)   \quad \mbox{as } n \to \infty.
\end{equation*}
\end{lemma}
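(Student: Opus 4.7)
The statement to prove is the classical Ibragimov--Linnik central limit theorem for stationary $\alpha$-mixing sequences. My plan is to follow the Bernstein big-block/small-block decomposition, combined with a mixing-to-independence reduction at the level of characteristic functions, and closed by a Lindeberg--Feller argument on independent copies of the big blocks.

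As a preliminary step I would establish that $\sigma^2$ is well defined and finite. The tool here is Davydov's covariance inequality: for random variables $U \in L^{2+\delta}(\mathcal{F}_{-\infty}^0)$ and $V \in L^{2+\delta}(\mathcal{F}_k^\infty)$ one has $|\Cov(U,V)| \le C \,\|U\|_{2+\delta}\|V\|_{2+\delta}\, \alpha_k^{\delta/(2+\delta)}$. Applied to the stationary sequence $(\xi_i)$, this gives $\sum_k |\Cov(\xi_1,\xi_k)| < \infty$ by the summability hypothesis, so $\sigma^2$ is defined by an absolutely convergent series. A direct re-summation then yields $\Var(S_n)/n \to \sigma^2$, where $S_n = \xi_1 + \cdots + \xi_n$.

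For the CLT itself I would introduce block sizes $p = p_n$ and $q = q_n$ satisfying $q/p \to 0$, $p/n \to 0$ and $(n/p)\,\alpha_{q_n} \to 0$; the last condition can always be achieved because $\alpha_k \to 0$. Writing $k \sim n/(p+q)$ and partitioning $\{1,\ldots,n\}$ into alternating big blocks of length $p$ and small blocks of length $q$, set $Y_j$ and $Z_j$ for the corresponding block sums of the $\xi_i$. Lemma~\ref{lem1} with $t=2$ (whose hypothesis is exactly $\sum \alpha_k^{\delta/(2+\delta)} < \infty$) gives $E(\sum_j Z_j)^2 \le C\, kq = o(n)$, so the small-block contribution is negligible in $L_2$ and may be discarded. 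Next, write the characteristic function of $\sum_j Y_j/(\sigma\sqrt{n})$. The Volkonskii--Rozanov lemma, which is a direct consequence of iterated application of the defining inequality for $\alpha$-mixing, yields
\[
 \Bigl| E\exp\!\Bigl(\tfrac{it}{\sigma\sqrt{n}}\sum_{j=1}^k Y_j\Bigr) - \prod_{j=1}^k E\exp\!\Bigl(\tfrac{it}{\sigma\sqrt{n}} Y_j\Bigr)\Bigr| \le 16(k-1)\,\alpha_q,
\]
which tends to zero by construction of $p,q$. Hence the distribution of $\sum_j Y_j/(\sigma\sqrt{n})$ has the same limit as that of $\sum_j \tilde{Y}_j/(\sigma\sqrt{n})$, where the $\tilde{Y}_j$ are independent copies of $Y_1$.

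It then remains to apply the Lindeberg--Feller CLT to the triangular array $\tilde{Y}_j/(\sigma\sqrt{n})$, $j=1,\ldots,k$. The variance part is immediate since $k\,\Var(\tilde{Y}_1)/n \to \sigma^2$ by the preliminary variance computation applied to blocks of length $p$. The main obstacle, and the most delicate step, is verifying the Lindeberg condition. I would use Markov's inequality to reduce it to a bound of the form $E|Y_1|^{2+\delta'} = o(p^{1+\delta'/2})$ for some $\delta' \in (0,\delta)$. By Hölder interpolation of the mixing rates, $\sum k^{\delta'/2 - 1}\alpha_k^{(2+\delta-(2+\delta'))/(2+\delta)}$ is finite for all small enough $\delta'>0$ as soon as $\sum \alpha_k^{\delta/(2+\delta)}$ converges at a polynomial rate; Lemma~\ref{lem1} with $t=2+\delta'$ then provides the needed moment bound. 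Combining steps gives $\sum_j \tilde{Y}_j/(\sigma\sqrt{n}) \claw N(0,1)$, which together with the mixing-to-independence bound and the negligibility of the small blocks proves the lemma.
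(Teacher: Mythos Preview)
The paper does not prove this lemma; it is quoted without proof from \citet{ibragimov:linnik:1971} in the auxiliary-results section, so there is no in-paper argument to compare against. Your outline follows the classical Bernstein blocking route, and the variance computation, small-block negligibility via Lemma~\ref{lem1} with $t=2$, and the characteristic-function reduction via Volkonskii--Rozanov are all correctly sketched.

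There is, however, a genuine gap in the Lindeberg step. You propose to bound $E|Y_1|^{2+\delta'}$ by invoking Lemma~\ref{lem1} with $t=2+\delta'$, whose hypothesis is $\sum_k k^{\delta'/2}\alpha_k^{(\delta-\delta')/(2+\delta)}<\infty$ (the exponent on $k$ is $t/2-1=\delta'/2$, not $\delta'/2-1$). You already concede that this holds only ``as soon as $\sum \alpha_k^{\delta/(2+\delta)}$ converges at a polynomial rate,'' but the lemma assumes mere summability: for instance, if $\alpha_k^{\delta/(2+\delta)}\asymp 1/(k(\log k)^2)$ then the Yokoyama condition fails for every $\delta'>0$, and your argument does not close. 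The standard repair, and the one actually carried out in Ibragimov--Linnik, is truncation: write $\xi_i=\xi_i'+\xi_i''$ at a fixed level $M$, run the blocking argument on the bounded piece $\xi_i'$ (where Lyapunov is immediate since all moments are finite), and control the contribution of $\xi_i''$ directly in $L_2$ via Davydov's inequality, using that $\|\xi_1''\|_{2+\delta}\to 0$ as $M\to\infty$. This avoids any higher-than-second moment bound on the block sums and works under the bare summability hypothesis.
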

\begin{lemma}{\rm \citep{philipp:1977}}
\label{lem:ep-lil}
Let $(\xi_n)_{n\geq 1}$ be a stationary sequence of strongly mixing random variables with $\alpha_n=O(n^{-8})$, 
and let $\eta_n=f(\xi_n,\xi_{n+1},\ldots)$, $n\geq 1$, be a sequence of functionals with common
distribution function $F(t)$, satisfying
\[
 \psi_m=E\left|\eta_n - E(\eta_n|\F_n^{n+m})  \right| =O(m^{-12}),
\]
where $\F_n^{n+m} = \sigma(\xi_n,\ldots,\xi_{n+m})$. Then
\[
\limsup_{n\rightarrow \infty} \sup_{t\in \R} \frac{\sqrt{n}(\hat{F}_n(t)-F(t))}{\sqrt{2\log\log n}}
\leq c
\]
almost surely, where $\hat{F}_n(t)=\frac{1}{n}\sum_{i=1}^n 1_{\{ \eta_i\leq t  \}}$.
\end{lemma}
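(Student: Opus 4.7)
The plan is to establish this empirical-process law of the iterated logarithm by combining four classical ingredients: (i) truncation of the functional $\eta_n$ by its conditional expectation $\eta_n^{(m)} = E(\eta_n \mid \F_n^{n+m})$, which converts the functional-of-mixing setting into an honest $\alpha$-mixing setting at the level of indicators; (ii) discretization of the supremum over $t \in \R$ via a finite grid of level sets of $F$; (iii) a Bernstein-type exponential inequality for bounded $\alpha$-mixing sequences, applied at each grid point; and (iv) a Borel--Cantelli argument along a geometric subsequence $n_k = \lfloor \rho^k \rfloor$, with monotonicity of $\hat{F}_n$ used to interpolate between subsequence indices.

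First I would fix a polynomially growing window $m = m_n$ and work with the truncated sequence $\eta_n^{(m)}$. Since $\eta_n^{(m)}$ is $\F_n^{n+m}$-measurable, so is $1_{\{\eta_n^{(m)} \le t\}}$, and hence the indicator process $(1_{\{\eta_n^{(m)} \le t\}})_{n \ge 1}$ inherits $\alpha$-mixing with coefficients $\alpha_{k-m} = O((k-m)^{-8})$ for $k > m$. The discrepancy between the original and truncated empirical processes is controlled via the sign-change bound
\[
E\bigl|1_{\{\eta_n \le t\}} - 1_{\{\eta_n^{(m)} \le t\}}\bigr|
\le P\bigl(|\eta_n - \eta_n^{(m)}| > \epsilon\bigr) + P\bigl(|\eta_n - t| \le \epsilon\bigr),
\]
where the first term is at most $\psi_m/\epsilon = O(m^{-12}/\epsilon)$ by Markov, and the second averages out to $O(\epsilon)$ when aggregated over a grid of $t$-values. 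Choosing $\epsilon$ and $m_n$ as small and moderately large polynomial powers of $n$ respectively makes the total truncation error negligible compared with $\sqrt{\log\log n / n}$.

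Next I would discretize the supremum: pick a grid $t_1 < \cdots < t_{N_n}$ with $F(t_{j+1}) - F(t_j) \le \delta_n$ and $N_n$ of polynomial order in $n$. Monotonicity of both $\hat{F}_n$ and $F$ then reduces $\sup_{t \in \R} |\hat{F}_n(t) - F(t)|$ to $\max_j |\hat{F}_n(t_j) - F(t_j)| + \delta_n$. For each fixed $t_j$, a Bernstein-type exponential inequality for bounded strongly mixing sequences (available under the polynomial rate $\alpha_n = O(n^{-8})$ by a standard big-block-small-block argument, cf.\ \citet{bradley:2007}) gives
\[
P\Bigl(|\hat{F}_n(t_j) - F(t_j)| > c \sqrt{\log\log n / n}\Bigr) \le n^{-\beta}
\]
with $\beta > 1$ once $c$ is taken large. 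A union bound over the $N_n$ grid points and summation along a geometric subsequence $n_k = \lfloor \rho^k \rfloor$ yield a summable series, so Borel--Cantelli delivers the $\limsup$ bound along the subsequence; finally the sandwich $n_k \hat{F}_{n_k}(t) \le n \hat{F}_n(t) \le n_{k+1} \hat{F}_{n_{k+1}}(t)$ combined with $n_{k+1}/n_k \to 1$ extends the bound to arbitrary $n$.

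The main obstacle is the first step. The hypothesis $\psi_m = O(m^{-12})$ controls only an $L_1$ distance between $\eta_n$ and $\eta_n^{(m)}$, but indicator functions are not Lipschitz in their argument, so the sign-change bound forces a trade-off in which $\epsilon$ is pushed down polynomially while $m_n$ is pushed up polynomially. The very fast rate $m^{-12}$ is what makes this trade-off compatible with the union bound over a polynomially large grid, the Bernstein bound at scale $\sqrt{\log\log n / n}$, and the mixing rate $\alpha_n = O(n^{-8})$ simultaneously; the bookkeeping of all these polynomial exponents is delicate and explains why the lemma as stated requires such strong approximation and mixing hypotheses.
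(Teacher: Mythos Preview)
The paper does not prove this lemma at all: it is listed in the ``Auxiliary results'' section as a quotation from \citet{philipp:1977}, with no argument supplied. So there is no in-paper proof to compare your proposal against; the authors simply invoke Philipp's result as a black box when they bound $\sqrt{l_n}\sup_t|\hat F_n(t)-F(t)|$ in the proof of Theorem~\ref{th:edf-ltwo}.

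As for your sketch on its own merits: the overall architecture (truncate to an $m$-dependent approximation, discretize in $t$, control each grid point probabilistically, pass to a geometric subsequence, interpolate by monotonicity) is exactly the skeleton Philipp uses, so in that sense you have the right picture. The one place where your outline is optimistic is step~(iii). A genuine Bernstein-type \emph{exponential} inequality for $\alpha$-mixing sequences typically requires exponentially or at least sub-polynomially decaying mixing coefficients; under a purely polynomial rate like $\alpha_n=O(n^{-8})$ the big-block/small-block coupling error is itself only polynomially small, so what one actually obtains is a polynomial tail bound rather than an exponential one. Philipp's route around this is not a Bernstein inequality but rather (a) a fourth-moment bound for the block sums combined with a maximal inequality, and (b) an almost sure invariance principle/martingale-approximation argument, which together yield the required summability along the subsequence without ever needing exponential tails. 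Your polynomial bookkeeping intuition at the end is correct in spirit --- the specific exponents $8$ and $12$ are there precisely so that moment bounds of sufficiently high order go through --- but the mechanism is moment inequalities rather than Bernstein.
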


%

\end{document}